\title{An Inductive Proof that Lights Out Configurations are Invertible, and a Parity-Invariance Result}
\author{Keivan Mirzaei\thanks{Email: keivan.mirzaei@ucalgary.ca}}
\date{\today}
\newtheorem{theorem}{Theorem}
\newtheorem{proposition}[theorem]{Proposition}
\newtheorem{remark}[theorem]{Remark}
\begin{document}
\maketitle

\begin{abstract}
We give an elementary inductive proof of a classical result for the \emph{Lights Out problem} on graphs: from any configuration of vertices, one can reach the complementary configuration by a sequence of moves, where a move consists of toggling a vertex and its neighbors. We also prove, again by a purely elementary argument, a parity-invariance property: once an initial configuration is fixed, the parity of the number of presses required to reach an attainable configuration is determined by that configuration. In particular, any two solutions leading to the same attainable configuration differ by an even number of presses.
\end{abstract}

\section{Introduction}
The \emph{Lights Out problem} has been studied from several points of view, most commonly via linear algebra over $\mathbb{F}_2$; see, for example, Anderson and Feil \cite{AndersonFeil1998}, the survey of Fleischer and Yu \cite{FleischerYu2013}, the expository account of Kreh \cite{Kreh2017}, and the graph-theoretic treatment of Berman, Borer, and Hungerb\"uhler \cite{BermanBorerHungerbuhler2021}. In the graph-theoretic language, the press sets solving the all-on configuration correspond to odd dominating sets, and this connects the game with the broader literature on parity domination; see again \cite{BermanBorerHungerbuhler2021,FleischerYu2013}.

In contrast, the present note emphasizes elementary arguments. Our first goal is to give a purely inductive and combinatorial proof that every configuration can be complemented. Our second goal is to record a simple parity-invariance phenomenon for solutions: for a fixed initial configuration, the parity of the number of presses depends only on the attained configuration. Related parity phenomena appear in the literature, typically in the language of quiet patterns or neutral procedures. For instance, Meyer proves that every quiet pattern in a symmetric reflexive Lights Out game has even cardinality \cite{Meyer2013}, and Mart\'\i n-S\'anchez and Pareja-Flores obtain related parity restrictions via neutral procedures \cite{MartinSanchezParejaFlores2001}. The result below is equivalent in substance to these observations, but appears to be new in this direct formulation. In addition, unlike the linear-algebraic arguments in those works over $\mathbb{F}_2$, our proof is completely elementary and uses only cancellation of repeated presses together with the handshaking lemma.

\section{Main Result}

\begin{theorem}
Let $G$ be a simple graph with $n$ vertices. From any initial configuration of the vertices, it is possible to reach the complementary configuration by a sequence of operations, where each operation toggles a vertex and all of its neighbors.
\end{theorem}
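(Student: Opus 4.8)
The plan is to recast the statement over $\mathbb{F}_2$ while keeping all reasoning combinatorial. Identify each configuration with a vector in $\mathbb{F}_2^{V}$, and observe that a single move at $v$ adds the closed-neighborhood indicator $N[v]$. Since the complement of any configuration $c$ is $c+\mathbf{1}$, reaching the complement from \emph{every} starting configuration is equivalent to the single statement $\mathbf{1}\in M_G$, where $M_G$ denotes the set of net effects reachable from the all-off state. First I would record that $M_G$ is closed under addition: performing two move-sequences in succession adds their effects, and repeating a move cancels it. Thus it suffices to exhibit $\mathbf{1}$ as a sum (mod $2$) of vectors $N[v]$, equivalently to find a set $S$ of vertices whose pressing toggles every vertex an odd number of times.

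Next I would isolate two elementary facts to serve as the engine of the argument. The first is that pressing every vertex toggles a fixed vertex $u$ exactly $|N[u]| = 1+\deg u$ times; hence the all-vertices press realizes the vector $\mathbf{1}_D\in M_G$, where $D$ is the set of even-degree vertices. The second is the handshake lemma: the number of odd-degree vertices, namely $|V\setminus D|$, is even.

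The core is a strong induction on $n$. The base case $n=1$ is immediate since $N[v]=\mathbf{1}$. For the step, delete a vertex $v$ and apply the induction hypothesis to $G-v$ to obtain a press-set $S_v\subseteq V\setminus\{v\}$ that toggles every $u\neq v$ an odd number of times. Reinterpreted inside $G$, this $S_v$ still toggles every $u\neq v$ oddly (as $v\notin S_v$), while toggling $v$ either oddly or evenly. In the first case $S_v$ already realizes $\mathbf{1}$ and we are done; in the second its effect is $\mathbf{1}+e_v$, the all-ones vector with the $v$-entry removed. If some choice of $v$ lands in the first case, the induction closes immediately.

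The main obstacle is the remaining bad case, in which every vertex $v$ yields only $\mathbf{1}+e_v\in M_G$. Here I would combine the two facts above: summing $\mathbf{1}+e_v$ and $\mathbf{1}+e_w$ gives $e_v+e_w\in M_G$, so by building up transposition by transposition, every even-weight vector lies in $M_G$. In particular $\mathbf{1}_{V\setminus D}$, the indicator of the odd-degree vertices, has even weight by the handshake lemma and is therefore in $M_G$; adding the press-all effect $\mathbf{1}_D$ yields $\mathbf{1}_D+\mathbf{1}_{V\setminus D}=\mathbf{1}\in M_G$. I expect this bad-case synthesis --- recognizing that the ``off-by-one'' obstructions generate all even-weight patterns, and marrying them to the degree-parity data --- to be the one genuinely inventive step, with the reduction and the two elementary facts being routine.
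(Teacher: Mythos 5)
Your proof is correct and is essentially the paper's argument: the same induction with vertex deletion, the same combination of two subgraph solutions to realize a pair toggle $e_v+e_w$, and the same finish via the handshake lemma plus the press-all move realizing the even-degree indicator. The $\mathbb{F}_2$ vector notation is only a bookkeeping device here; the underlying combinatorial mechanism matches the paper step for step.
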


\begin{proof}
We proceed by induction on $n$. For $n=1$ the claim is immediate. Assume it holds for graphs on $k$ vertices, and let $G$ have $k+1$ vertices $V=\{v_1,\dots,v_{k+1}\}$.

For each $i$, consider the induced subgraph $G_i$ on $V\setminus\{v_i\}$. By the induction hypothesis, there is a sequence $P_i$ of operations supported on $G_i$ that toggles all vertices of $G_i$. If for some $i$ this sequence also toggles $v_i$, then $P_i$ already toggles every vertex of $G$, and we are done. Thus we may assume that, for the indices under consideration, $P_i$ leaves $v_i$ unchanged.

Now take two distinct vertices $v_i$ and $v_j$. Applying $P_i$ followed by $P_j$ toggles each vertex of $V\setminus\{v_i,v_j\}$ twice (so they remain unchanged), while toggling $v_i$ and $v_j$ once. Hence any pair of vertices can be toggled simultaneously, leaving all other vertices unaffected.

To obtain the complementary configuration, we first use the previous step to toggle all the vertices of odd degree (if any) in pairs, which is possible because the number of odd-degree vertices is even. Finally, we toggle every vertex once. The vertices of odd degree have then been toggled twice in total, while the vertices of even degree have been toggled once, so every vertex changes state exactly once. Thus the complementary configuration is reached. By induction, the result holds for all $n$.
\end{proof}

\begin{remark}
The standard proof of the theorem above uses the adjacency-plus-identity matrix over $\mathbb{F}_2$. The argument given here is instead purely inductive and combinatorial.
\end{remark}

\section{A Parity-Invariance Result}

We shall use two basic observations. First, the order of presses does not matter. Second, pressing the same vertex twice has no net effect. Consequently, every press sequence may be identified with the subset of vertices pressed an odd number of times.

\begin{proposition}[Parity invariance]
For any attainable configuration in
the Lights Out problem, the parity of the number of button
presses is uniquely determined by the configuration. In particular,
any two solutions differ by an even number of presses.
\end{proposition}

\begin{proof}
It is enough to show that the initial configuration has even parity. In fact, if a configuration were attainable by both an even and an odd number of moves, then performing these two sequences consecutively would return the system to the initial configuration using an odd number of presses.

Now let $S\subseteq V(G)$ be a press set that returns the system to its initial configuration. Consider the induced subgraph $G[S]$. For a vertex $v\in S$, $v$ is toggled by its own press and all of its neighbors in $S$. So in order to end unchanged it must have an odd degree in $G[S]$. Therefore every vertex of $G[S]$ has odd degree and hence $G[S]$ has an even number of vertices according to the handshaking lemma.
\end{proof}

\begin{remark}
Although equivalent parity phenomena appear in the literature in the context of quiet patterns and neutral procedures; see \cite{Meyer2013,MartinSanchezParejaFlores2001}, the formulation presented here seems to be new. We include the argument above because it is entirely self-contained and avoids any use of linear algebra.
\end{remark}

\end{document}